\documentclass[12pt,cd]{amsart}
\usepackage{latexsym}
\usepackage{amsmath,amsfonts,amssymb,amsthm,mathtools,amscd,bm,mathabx}
\usepackage{graphics}
\usepackage{csquotes}
\usepackage{blindtext}
\usepackage{xcolor}
 2

\usepackage{hyperref}
\newtheorem{theorem}{Theorem}[section]
\newtheorem{prop}[theorem]{Proposition}

\newtheorem{definition}[theorem]{Definition}

\newtheorem{Note}[theorem]{Note}
\newtheorem{remark}[theorem]{Remark}
\numberwithin{equation}{section}

\title{Harmonic Enneper Immersion in $\mathbb{R}^3$}

\author{Priyank Vasu}

\address{Department of Mathematics, Indian Institute of Technology Patna, Bihta, Patna-801106, Bihar, India}

\thanks{Priyank Vasu: \href{mailto:priyank_2121ma16@iitp.ac.in}{priyank\_2121ma16@iitp.ac.in}}

\subjclass[2020]{53C43,30C62,53A10}
\keywords{Harmonic immersions, Enneper immersions, Weierstrass representation, Harmonic Enneper immersion, Harmonic surfaces}
\begin{document}

\maketitle

\begin{abstract}

We present a method for constructing harmonic immersions in $\mathbb{R}^3$, known as the Enneper-type representation. We also prove that any harmonic immersion in $\mathbb{R}^3$ can be obtained using this approach. Furthermore, we determine the number of non-planar rotational harmonic immersions in $\mathbb{R}^3$ that connect two coaxial circles in parallel planes, where both circles have the same radius $r > 0$ and are separated by a distance $l > 0$.

\end{abstract}

\section{Introduction}

An immersed surface in $\mathbb{R}^3$  is called \textit{harmonic immersion} if it admits a parametrization whose coordinate functions are harmonic (the Dirichlet equation $\Delta X = 0$, where $\Delta$ denotes the Laplace operator, defined by $\Delta = \partial_{xx} + \partial_{yy}$ in local coordinates $(x, y)$). In particular, the theory of minimal surfaces is closely related to the study of conformal harmonic immersions. Among the most well-known harmonic immersions are minimal surfaces, where the Weierstrass representation provides a harmonic parametrization.  It is well known that a conformal immersion of a Riemann surface into Euclidean 3-space is minimal if and only if it is harmonic, and in this case, its Gauss map is conformal. However, harmonic immersions are not necessarily conformal (for example, $Y_1$ in \eqref{example}).
 The relationship among conformality, minimality, and harmonicity plays a central role in geometric analysis, especially within the study of surfaces. Our goal in this paper is to analyze the case when a surface is simply a harmonic immersion without necessarily being minimal.

The study of harmonic immersions in $\mathbb{R}^3$ began in the 1960s with the work of Klotz \cite{Klotz1967,Klotz1968,KlotzMilnor1979,KlotzMilnor1980}, and around the same time, Osserman \cite{Osserman1986} developed the foundations of the global theory of minimal surfaces. Klotz focused on distinguishing properties unique to minimal surfaces from those that apply more generally to harmonic immersions. More recently, researchers have extended this work to study harmonic quasiconformal immersions in $\mathbb{R}^3$, properties of the Gauss map, and Weierstrass-type formulas for harmonic immersions \cite{AlarconLopez2013,Kalaj2013}. Additionally, the Gauss-Bonnet formula for harmonic immersions and the global study of harmonic immersions of arbitrary genus have been investigated in \cite{Connor2015,Connor2018}.

In this paper, we introduce a new type of representation for harmonic immersions, inspired by Andrade's method \cite{Andrade1998}, which provides an alternative way to construct minimal surfaces in Euclidean 3-space. This method is equivalent to the classical Weierstrass representation and is commonly referred to as the Enneper representation. As an application, we address the problem of determining the number of non-planar rotational harmonic immersions that connect two coaxial circles in parallel planes. Both circles have the same radius $r > 0$ and are separated by a distance $l > 0$.

The paper is organized as follows. In Section \ref{Preliminaries}, we recall some fundamental concepts of harmonic immersions. In Section \ref{main results}, we introduce an Enneper-type representation for harmonic immersions and show that any harmonic immersion can be locally expressed as the \textit{harmonic Enneper graph} of a real-valued harmonic function with an associated Hopf differential defined on a complex domain. In Section \ref{application}, we classify all rotational harmonic immersions and apply these results to determine the number of rotational harmonic immersions that connect two coaxial circles of the same radius in parallel planes.

	\section{Preliminaries} \label{Preliminaries}
    Let $\Omega$ be a connected open set in the complex plane $\mathbb{C}$. We 
identify $\mathbb{R}^2$ with $\mathbb{C}$ by writing $z = x + iy$, where $ (x, y)\in \mathbb{R}^2$. 
\begin{definition}
     A map $X = (X_j )_{j=1,2,3}: \Omega \rightarrow \mathbb{R}^3$ is said to be a harmonic
immersion if $X$ is an immersion and $X_j;\, j=1,2,3,$ are harmonic functions on $\Omega.$
A subset $\mathcal{S} \subset \mathbb{R}^3$ is said to be a harmonic surface if there exists a harmonic immersion $X: \Omega  \rightarrow \mathbb{R}^3$ such that $\mathcal{S} = X(\Omega)$. In this case, $X$ is said to be a
harmonic parameterization of $\mathcal{S}$.

\end{definition}
A harmonic immersion may admit different harmonic parameterizations. For instance,
\begin{equation} \label{example}
    \begin{aligned}
        Y_1 &: \mathbb{C} \to \mathbb{R}^3, \quad &Y_1(z) &= \operatorname{Re} \big(e^z, z e^z, iz \big), \\
        Y_2 &: \{\operatorname{Re}(z) > 0\} \to \mathbb{R}^3, \quad &Y_2(z) &= \operatorname{Re} \big( \sinh(z), i\cosh(z), iz \big),
    \end{aligned}
\end{equation}
are two different harmonic parameterizations of the same surface (the half helicoid).

We define the Wirtinger operators $\partial_z$ and $\partial_{\bar{z}}$ as the complex differential operators given by
$\partial_z := \frac{\partial}{\partial x} - i \frac{\partial}{\partial y}, \quad \partial_{\bar{z}} := \frac{\partial}{\partial x} + i \frac{\partial}{\partial y},$
where \( z = x + i y \in \Omega \). Let \( X = (X_j)_{j=1,2,3} \) be a harmonic immersion. Also, we define
$$
\Phi=\left(\phi_j\right)_{j=1,2,3}:=\left(\partial_z X_j\right)_{j=1,2,3} .
$$
\begin{definition}\label{definition Weierstrass representation} The couple $(\Omega, \Phi)$ is called the Weierstrass representation of $X$. The holomorphic 2-form $\mathfrak{H}:=\sum_{j=1}^3 \phi_j^2$ on $\Omega$ is said to be the Hopf differential of the harmonic immersion $X$.\end{definition} 

Let $z = x + iy \in \Omega$, and write $X_x = \frac{\partial X}{\partial x} ,X_y = \frac{\partial X}{\partial y}, X_z=\partial_z X, X_{\bar{z}}=\partial_{\bar{z}} X  $. Define $||\Phi||:= \left(\sum _{j=1}^3|\phi_j|^2\right)^{1/2}$. Then it follows that $||X_x \times X_y||=\sqrt{||\Phi||^2-|\mathfrak{H}|^2}$.
%\begin{lemma}[*]\label{harmonic immersion}
   % Let $\Phi=\left(\Phi_j\right)_{j=1,2,3}$ be a triple of holomorphic 1-forms on $M$ such that
    %\begin{enumerate}
      %  \item $\left|\sum_{j=1}^3 \Phi_j^2\right|<\|\Phi\|^2$ everywhere on $M$,
     %   \item $\Phi$ has no real periods on $M$.
    %\end{enumerate} 

%Then the map $X: M \rightarrow \mathbb{R}^3, X(P)=\Re\left(\int^P \Phi\right)$, is a harmonic immersion with Hopf differential $\mathfrak{H}=\sum_{j=1}^3 \Phi_j^2$ on $M$.
%\end{lemma}

\begin{theorem}\cite{AlarconLopez2013}\label{harmonic immersion}
   Let $X: \Omega \rightarrow \mathbb{R}^3$ be a harmonic immersion. Then, 
   \begin{equation*}
       \Phi=\left(\phi_j\right)_{j=1,2,3}=\left(\partial_z X_j\right)_{j=1,2,3} ;
   \end{equation*}
 satisfies the following conditions:
    \begin{enumerate}
   % \item $\sum_{j=1}^3 \Phi_j^2=\mathfrak{H}$
        \item $\left|\sum_{j=1}^3 \phi_j^2\right|<\|\Phi\|^2$ everywhere on $\Omega$,
        \item ${\partial_{\bar{z}} \phi_j}=0 \quad for \; j=1,2,3$.
    \end{enumerate} 
   Conversely, if $\Omega $ is a simply connected domain and $\phi_j;\, j=1,2,3,$ are holomorphic functions satisfying the conditions above, then the map 
   $$X=\operatorname{Re}\left(\int^z_{z_{0}} \Phi \;dz\right),$$
   is a well-defined harmonic immersion (here,
$z_{0}$ is an arbitrary fixed point of $\Omega$).
\end{theorem}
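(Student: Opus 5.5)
The proof has two directions. Both rest on one algebraic identity linking $\|\Phi\|$ and $\mathfrak{H}$ to the Jacobian of $X$. Throughout I use the paper's normalization $\partial_z=\partial_x-i\partial_y$, $\partial_{\bar z}=\partial_x+i\partial_y$, which has no factor $\tfrac12$. With it, $\phi_j=\partial_x X_j-i\,\partial_y X_j$, i.e. $\Phi=X_x-iX_y$ as a vector in $\mathbb{C}^3$.

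\emph{Direct implication.} For condition (2), compute $\partial_{\bar z}\phi_j=(\partial_x+i\partial_y)(\partial_x-i\partial_y)X_j=\Delta X_j=0$. This uses only harmonicity of $X_j$, and $C^2$ regularity is automatic for harmonic functions.

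For condition (1), expand both quantities in terms of $X_x$ and $X_y$:
\[
\|\Phi\|^2=|X_x|^2+|X_y|^2,\qquad \mathfrak{H}=|X_x|^2-|X_y|^2-2i\,\langle X_x,X_y\rangle .
\]
Hence
\[
\|\Phi\|^4-|\mathfrak{H}|^2=(|X_x|^2+|X_y|^2)^2-(|X_x|^2-|X_y|^2)^2-4\langle X_x,X_y\rangle^2=4\bigl(|X_x|^2|X_y|^2-\langle X_x,X_y\rangle^2\bigr)=4\|X_x\times X_y\|^2 .
\]
This is the identity quoted before the theorem, up to the constant coming from the normalization. Since $X$ is an immersion, $X_x\times X_y\neq 0$ everywhere. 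Therefore $|\mathfrak{H}|<\|\Phi\|^2$ everywhere on $\Omega$.

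\emph{Converse.} Let $\Omega$ be simply connected and let $\phi_j$ be holomorphic. By Cauchy's theorem the integral $F_j(z)=\int_{z_0}^z\phi_j\,dz$ is path independent, so it defines a holomorphic primitive with $F_j'=\phi_j$. Then $X_j=\operatorname{Re}F_j$ is well defined and harmonic, being the real part of a holomorphic function.

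Next I recover $\Phi$ from $X$. Write $X_j=\tfrac12(F_j+\overline{F_j})$. With this normalization, $\partial_z F_j=2F_j'$ and $\partial_z\overline{F_j}=0$, so $\partial_z X_j=F_j'=\phi_j$. Thus the Weierstrass data of $X$ is exactly $\Phi$.

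Applying the identity above to this $X$ gives $4\|X_x\times X_y\|^2=\|\Phi\|^4-|\mathfrak{H}|^2>0$ by hypothesis (1). So $X_x$ and $X_y$ are linearly independent at every point, and $X$ is an immersion.

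\emph{Main obstacle.} The only delicate point is bookkeeping the constants created by the nonstandard Wirtinger normalization. Specifically, one must check that $\partial_z\operatorname{Re}F=F'$ and not $\tfrac12F'$, and one must track the factor $4$ in the cross-product identity. Neither affects the strict inequality, but both must be checked for consistency.
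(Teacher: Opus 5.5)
Your proof is correct and complete. The paper does not prove this statement (it is quoted from Alarc\'on--L\'opez), so there is no competing argument to compare against. What you give is the standard proof. It rests on the same Jacobian identity the paper records just before the theorem. Your bookkeeping with the unnormalized operators checks out: $\Phi=X_x-iX_y$, $\partial_{\bar z}\phi_j=\Delta X_j$, and $\partial_z\operatorname{Re}F=F'$ for holomorphic $F$. So $X=\operatorname{Re}\int_{z_0}^z\Phi\,dz$ really does satisfy $\partial_z X=\Phi$, and the converse goes through.

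One correction to a side remark. Your identity $4\|X_x\times X_y\|^2=\|\Phi\|^4-|\mathfrak{H}|^2$ does not agree with the paper's displayed $\|X_x\times X_y\|=\sqrt{\|\Phi\|^2-|\mathfrak{H}|^2}$ merely ``up to a constant''. The paper's formula is not homogeneous under $X\mapsto\lambda X$: the left side scales by $\lambda^2$, while $\|\Phi\|^2$ scales by $\lambda^2$ and $|\mathfrak{H}|^2$ by $\lambda^4$. So the paper's formula is a misprint, and your version is the correct one. Your argument uses only the identity you derived yourself, so nothing in your proof is affected. The strict inequality in condition (1) is then equivalent to the immersion condition via the factorization $\|\Phi\|^4-|\mathfrak{H}|^2=(\|\Phi\|^2-|\mathfrak{H}|)(\|\Phi\|^2+|\mathfrak{H}|)$.
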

\begin{Note}
    The first condition in Theorem \ref{harmonic immersion} guarantees that $X$ is an immersion. Moreover, if the Hopf differential satisfies $\mathfrak{H} \equiv 0$ in $\Omega$, then the corresponding harmonic immersion $X$ is minimal.
\end{Note}   
Let \( X: \Omega \to \mathbb{R}^3 \) be a regular parametrization of the smooth surface \( \mathcal{S} = X(\Omega) \).
Also, let \( S^2 \) be the unit sphere in \( \mathbb{R}^3 \). Then the orientation-preserving Gauss map \( {n}: \Omega \to S^2 \) of \( \mathcal{S} \) is defined as follows:
\[
{n} = \frac{X_x \times X_y}{||X_x \times X_y||}.
\]
If we denote by $\xi: S^2-\{(0,0,1)\} \rightarrow \mathbb{R}^2$, $\xi\left(x_1, x_2, x_3\right)=\left(x_1 /\left(1-x_3\right), x_2 /\left(1-x_3\right)\right)$ the stereographic projection, then the orientation-preserving map $g:=\xi \circ n$ is said to be the complex Gauss map of $X$.

 An orientation-preserving smooth mapping \( k: \Omega \to \Omega' \) between two domains in \( \mathbb{C} \) is called \textit{quasiconformal} if \( |\mu| < 1 \), where \( \mu \) is the \textit{Beltrami differential} (first complex
dilatation) of \( k \), given by $
\mu := \frac{ k_{\bar{z}}}{ k_z}$.

\begin{definition}[\cite{Kalaj}]
A smooth mapping \( X : \Omega \to \mathbb{R}^3 \) is called quasiconformal if, 
\[
D_X := \frac{||X_x||^2 + ||X_y||^2}{2\|X_x \times X_y\|} \leq 1,
\]
on $\Omega.$
\end{definition}

Since the stereographic projection is conformal, the Gauss map \( n \) is quasiconformal if and only if the map \( g \) is quasiconformal.

\begin{definition}\cite{AlarconLopez2013}
    A harmonic immersion $X: \Omega \rightarrow \mathbb{R}^3$ is said to be quasiconformal if its orientation-preserving Gauss map $n: \Omega \rightarrow S^2$ is quasiconformal (or, equivalently, if $g$ is quasiconformal). In this case, $X$ is called a harmonic quasiconformal parameterization of the harmonic surface $\mathcal{S}=X(\Omega)$.  
\end{definition}

\begin{Note}
    Notice that \( Y_2 \) is conformal, whereas \( Y_1 \) is quasiconformal in equation \eqref{example}.
\end{Note}

\section{Main results}\label{main results}
In this section, we prove an Enneper-type representation formula for harmonic immersion.

\begin{theorem}\label{theorem 1}
    Let $h: \Omega \rightarrow \mathbb{R}$ be a harmonic function and $\mathfrak{H}$ a holomorphic 2-form in the simply connected domain $\Omega$. Suppose that
 $L,P : \Omega \rightarrow \mathbb{C}$ are two holomorphic functions such that the following conditions are
satisfied:

\begin{equation}\label{condition 1 in theorem 1}
    L_zP_z+(h_z)^2=\mathfrak{H}
\end{equation}
and
\begin{equation}\label{condition 2 in theorem 1}
    |L_z|\neq|P_z| \quad \text{on } \Omega,
\end{equation}
then the map $X:\Omega \rightarrow \mathbb{C}\times \mathbb{R}$ given by $X(z)= (L(z)+\overline{P(z)}, h(z))$ defines a harmonic immersion with Hopf differential $\mathfrak{H}$.
\end{theorem}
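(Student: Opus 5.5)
The plan is to compute the Weierstrass data $\Phi$ of $X$ explicitly and then check the two conditions of Theorem \ref{harmonic immersion}. The only subtle point, which I expect to be the main obstacle, is keeping the normalization of $\partial_z$ consistent throughout.

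\textbf{Step 1: harmonicity and the real coordinates.} Write $X=(X_1,X_2,X_3)$ with $X_1+iX_2=L+\overline{P}$ and $X_3=h$. Taking real and imaginary parts gives
$$X_1=\operatorname{Re}(L+P),\qquad X_2=\operatorname{Im}(L-P)=\operatorname{Re}\big(-i(L-P)\big).$$
Each $X_j$ is therefore the real part of a holomorphic function, or the given harmonic function $h$. Hence all three coordinates are harmonic on $\Omega$.

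\textbf{Step 2: the Weierstrass data.} For holomorphic $F$ we have $\partial_{\bar z}F=0$ and $\partial_z\overline F=0$. With the paper's operators this gives $\partial_z\operatorname{Re}F=\tfrac12\partial_z F=\tfrac12F_z$. Therefore
$$\phi_1=\tfrac12(L_z+P_z),\qquad \phi_2=-\tfrac{i}{2}(L_z-P_z),\qquad \phi_3=h_z .$$
Here $h_z$ is holomorphic because $h$ is harmonic. So condition (2) of Theorem \ref{harmonic immersion} holds.

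For the Hopf differential, the polarization identity $\tfrac14\big((a+b)^2-(a-b)^2\big)=ab$ gives
$$\sum_j\phi_j^2=\tfrac14\big((L_z+P_z)^2-(L_z-P_z)^2\big)+h_z^2=L_zP_z+h_z^2=\mathfrak H,$$
using \eqref{condition 1 in theorem 1}. So the Hopf differential of $X$ is $\mathfrak H$.

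The normalization matters here. Whatever the precise meaning of $L_z$, the factor $\tfrac12$ in $\phi_1,\phi_2$ is exactly what makes the cross term equal $L_zP_z$, matching \eqref{condition 1 in theorem 1}. I will state the convention explicitly in the write-up.

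\textbf{Step 3: the immersion condition.} The parallelogram law gives
$$\|\Phi\|^2=\tfrac14\big(|L_z+P_z|^2+|L_z-P_z|^2\big)+|h_z|^2=\tfrac12\big(|L_z|^2+|P_z|^2\big)+|h_z|^2 .$$
By the triangle inequality and then AM--GM,
$$|\mathfrak H|\le |L_z||P_z|+|h_z|^2\le \tfrac12\big(|L_z|^2+|P_z|^2\big)+|h_z|^2=\|\Phi\|^2 .$$
Equality in AM--GM forces $|L_z|=|P_z|$, which \eqref{condition 2 in theorem 1} excludes. So the inequality is strict everywhere on $\Omega$, and condition (1) of Theorem \ref{harmonic immersion} holds.

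\textbf{Step 4: conclusion.} From the identity $\|X_x\times X_y\|=\sqrt{\|\Phi\|^2-|\mathfrak H|^2}$ recorded in the Preliminaries, $\|X_x\times X_y\|>0$, so $X$ is an immersion. Alternatively, since $\Omega$ is simply connected, the converse part of Theorem \ref{harmonic immersion} shows that $\operatorname{Re}\int_{z_0}^z\Phi\,dz$ is a harmonic immersion. This map agrees with $X$ up to an additive constant, because both have the same $\partial_z$-derivative and are real. Either way, $X$ is a harmonic immersion with Hopf differential $\mathfrak H$.
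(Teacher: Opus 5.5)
Your proof is correct and is essentially the paper's. You use the same data $\phi_1=\tfrac12(L_z+P_z)$, $\phi_2=\tfrac{i}{2}(P_z-L_z)$, $\phi_3=h_z$, the same computation of $\mathfrak H$, and the same strict inequality: your triangle inequality plus AM--GM is the paper's bound $2(\|\Phi\|^2-|\mathfrak H|)\ge(|L_z|-|P_z|)^2>0$. You then make the same appeal to the converse of Theorem~\ref{harmonic immersion}.

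One caution about the first route in your Step 4. The identity $\|X_x\times X_y\|=\sqrt{\|\Phi\|^2-|\mathfrak H|^2}$ printed in the Preliminaries is misstated; the correct version involves $\|\Phi\|^4-|\mathfrak H|^2$, up to a normalization constant. As printed, positivity does not follow from $|\mathfrak H|<\|\Phi\|^2$. Use the corrected identity there, or simply rely on your second route, which is exactly what the paper does.
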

\begin{proof}
    Consider the three complex-valued functions on $\Omega$ given by:
    \begin{equation}
        \phi_1=\frac{L_z+P_z}{2},\;\phi_2=\frac{i(P_z-L_z)}{2},\; \phi_3=h_z.
    \end{equation}
    As $L_z=\phi_1+i\phi_2$ and $P_z=\phi_1-i\phi_2$, it follows from Theorem~\ref{harmonic immersion} that
    \begin{equation}\label{hopf diff equation}
        \sum_{j=1}^3 \phi_j^2=L_zP_z+(h_z)^2=\mathfrak{H}.
    \end{equation}
    Also, using \eqref{hopf diff equation}
   \begin{align*}
    2\left(\|\Phi\|^2 - \left|\sum_{j=1}^3 \phi_j^2\right|\right) 
    &= |L_z|^2 + |P_z|^2 + 2|h_z|^2 - 2|\mathfrak{H}| \\
    &= |L_z|^2 + |P_z|^2 + 2|h_z|^2 - 2|L_zP_z + (h_z)^2| \\
    &\geq |L_z|^2 + |P_z|^2 + 2|h_z|^2 - 2|L_zP_z| - 2|h_z|^2 \\
    &= (|L_z| - |P_z|)^2>0.
\end{align*}
We now observe that since $h$ is a harmonic function (i.e., 
$h_{xx} + h_{yy} = 0$), it follows that $\phi_3 = h_z$
is holomorphic. Moreover, the holomorphicity of $L$ and $P$ implies that the real
and imaginary parts of $L$ and $P$ are harmonic functions. In particular, we can write
\begin{equation*}
    \phi_1=\frac{\partial\operatorname{Re}(L+P)}{\partial z}, \quad \phi_2=\frac{\partial\operatorname{Im}(L-P)}{\partial z},
\end{equation*}
which implies that $(\phi_1)_{\bar{z}}=0=(\phi_2)_{\bar{z}}$. From Theorem \ref{harmonic immersion}, we conclude that
\begin{align*}
    X(z)&=2\left(\operatorname{Re}\int \phi_1(z) dz+i\operatorname{Re}\int \phi_2(z) dz,\operatorname{Re}\int \phi_3(z) dz\right)\\
    &=(L(z)+\overline{P(z)},h(z)),
\end{align*}
is a harmonic immersion with Hopf differential $\mathfrak{H}$.
\end{proof}

We call the immersion $X = (L + \bar{P}, h)$
a harmonic Enneper immersion associated with $h$, its image a \textit{harmonic Enneper graph} of $h$ with Hopf differential $\mathfrak{H}$ and
$D_X = (L_z, P_z, h_z)$ the \textit{Enneper data} of $X$.

Next, we show that any harmonic immersion can be rendered as the harmonic Enneper graph of some harmonic function.
\begin{theorem}
    Let $\Tilde{X} : \mathcal{M} \rightarrow \mathbb{R}^3 \equiv \mathbb{C} \times \mathbb{R}$ be a harmonic immersion from a Riemann surface
$\mathcal{M}$ in $\mathbb{R}^3$. Then, there exists a simply connected domain $\Omega$, a harmonic function
$h : \Omega  \rightarrow \mathbb{R}$, and a holomorphic function $\mathfrak{H} : \Omega  \rightarrow \mathbb{C}$ such that the immersed harmonic surface $\Tilde{X}(\mathcal{M})$ is a harmonic Enneper graph of $h$ with Hopf differential $\mathfrak{H}$.
\end{theorem}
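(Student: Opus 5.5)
The plan is to work locally and then apply the converse direction, namely Theorem \ref{theorem 1}. Since the statement concerns an immersed surface locally given by a harmonic parametrization, I first pick a point of $\mathcal{M}$ and a conformal coordinate chart around it. The chart image is taken to be a disc, which gives a simply connected domain $\Omega\subset\mathbb{C}$. Harmonicity is invariant under conformal changes of coordinates on a Riemann surface, so the composition $X=\Tilde X\circ\varphi^{-1}:\Omega\to\mathbb{C}\times\mathbb{R}$ is again a harmonic immersion. I write $X=(X_1+iX_2,\,X_3)$ with each $X_j$ real harmonic on $\Omega$. I take $\Phi=(\phi_j)=(\partial_zX_j)$ and set $\mathfrak{H}=\sum_j\phi_j^2$. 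By Theorem \ref{harmonic immersion}, $\mathfrak{H}$ is holomorphic and $|\mathfrak{H}|<\|\Phi\|^2$ on $\Omega$.

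Next I construct the Enneper data. Put $h:=X_3$, which is real harmonic, so $h_z=\phi_3$. Since $\Omega$ is simply connected, the holomorphic functions $\phi_1\pm i\phi_2$ have holomorphic primitives. With the paper's normalization $\partial_z=\partial_x-i\partial_y$, a holomorphic $F$ satisfies $F'=\tfrac12 F_z$. I therefore define holomorphic $L,P$ with $L_z=\phi_1+i\phi_2$ and $P_z=\phi_1-i\phi_2$, normalized at a base point $z_0$ so that $L+\bar P$ and $X_1+iX_2$ agree at $z_0$. Then $L_zP_z+h_z^2=\phi_1^2+\phi_2^2+\phi_3^2=\mathfrak{H}$, which is condition \eqref{condition 1 in theorem 1}. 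To identify $X$ with $(L+\bar P,h)$, I note that $X_1+iX_2$ is complex harmonic on a simply connected domain. It therefore splits as $A+\bar B$ with $A,B$ holomorphic, and then $\partial_z(X_1+iX_2)=A_z$ and $\partial_z\overline{(X_1+iX_2)}=B_z$. Comparing these with $\phi_1+i\phi_2$ and $\phi_1-i\phi_2$ gives $A_z=L_z$ and $B_z=P_z$. So $L+\bar P$ and $X_1+iX_2$ differ by a constant, and the normalization makes that constant zero.

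It remains to verify condition \eqref{condition 2 in theorem 1}, $|L_z|\neq|P_z|$, and I expect this to be the main obstacle. Since $\|\Phi\|^2-|\mathfrak H|$ does not directly control $|L_z|-|P_z|$, the identity $|L_z|^2-|P_z|^2=4\operatorname{Im}(\bar\phi_1\phi_2)$ suggests it is essentially the Jacobian of the horizontal projection $(X_1,X_2)$. That Jacobian can vanish, for instance at a vertical tangent plane. My first approach is to exploit the freedom of shrinking $\Omega$ and of choosing the splitting $\mathbb{R}^3=\mathbb{C}\times\mathbb{R}$. Because $\Tilde X$ is an immersion, at a given point I can pick the vertical axis so that the tangent plane is not vertical, after a rotation if the statement is read up to rigid motions. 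The projection to the horizontal $\mathbb{C}$ factor then has nonzero Jacobian at that point, so $|L_z|\neq|P_z|$ holds there, and by continuity it persists on a small disc $\Omega$. The surface is then covered by such local pieces. If the rotation must be avoided, I instead choose a point where the Jacobian is nonzero, which exists unless the surface is a vertical cylinder-type piece, and treat that degenerate case separately. Finally, Theorem \ref{theorem 1} applied to $(h,\mathfrak{H},L,P)$ shows that $X(\Omega)$ is a harmonic Enneper graph of $h$ with Hopf differential $\mathfrak{H}$.
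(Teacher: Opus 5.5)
Your construction of $h=X_3$, $\mathfrak H$, $L$ and $P$, and your checks that $L+\overline P=X_1+iX_2$ and $L_zP_z+h_z^2=\mathfrak H$, are correct and coincide with the paper's. The only difference is that the paper works on the universal cover $\Pi:\Omega\to\mathcal M$ (the disc or the plane). That gives a single simply connected $\Omega$ with $X(\Omega)=\Tilde{X}(\mathcal M)$, whereas you restrict to a small disc.

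As a proof of the statement, your proposal has a real gap. It only shows that each point has a neighbourhood whose image becomes a harmonic Enneper graph after a rotation. ``The surface is covered by such local pieces'' is not the claim that $\Tilde{X}(\mathcal M)$ is one harmonic Enneper graph in the fixed splitting $\mathbb{C}\times\mathbb{R}$. Your rotation-free fallback cannot repair this. One good point only controls a neighbourhood of itself, and the degenerate case cannot be treated at all. Indeed $|L_z|^2-|P_z|^2=4\operatorname{Im}(\phi_1\overline{\phi_2})$ (your identity, up to sign) is $4$ times the Jacobian of $L+\overline P$. So \eqref{condition 2 in theorem 1} forces the horizontal projection of every harmonic Enneper graph to be open in $\mathbb{C}$. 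That rules out any piece of the vertical plane $X(z)=(\operatorname{Re}z,0,\operatorname{Im}z)$, where $L_z=P_z=1$.

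No argument can close this gap, though, and the obstacle you identified is exactly where the paper's proof breaks. The paper ends with
\[
0<2\|X_x\times X_y\|^2=|L_z|^2+|P_z|^2+2|h_z|^2-2|\mathfrak H|\le(|L_z|-|P_z|)^2 .
\]
The triangle inequality, as used in the proof of Theorem \ref{theorem 1}, gives $\ge$ there, not $\le$. For the vertical plane the middle expression equals $4$ while $(|L_z|-|P_z|)^2=0$.

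With \eqref{condition 2 in theorem 1} built into the definition, the statement is false, and not only because the vertical direction is fixed. The Gauss image of the catenoid is the sphere minus two antipodal points, which meets every great circle. So for every choice of vertical axis some tangent plane is vertical, and the horizontal projection of any parametrization is singular there. The paper's own Section \ref{application} data for the catenoid, $L=z/2$ and $P=1/(2z)$, give $L_z=1$ and $|P_z|=|z|^{-2}$, which coincide on the neck $|z|=1$.

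So what you actually prove, a local result up to a rotation, is the correct salvageable version. Alternatively, if \eqref{condition 2 in theorem 1} is dropped from the definition, the paper's universal-cover construction does give the global statement, and your restriction to a small disc is then unnecessary.
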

\begin{proof}
    Suppose that the harmonic immersion is given by $\Tilde{X}=(\Tilde{X}_1+i\Tilde{X}_2,\Tilde{X}_3)$. Since $\Tilde{X}$ is a harmonic immersion, $\mathcal{M}$ cannot be compact ($\Tilde{X}$ would be a harmonic function on a compact Riemannian surface, hence constant). Now, by Koebe’s uniformization theorem, the universal covering space $\Omega$ of $\mathcal{M}$ is either the complex plane $\mathbb{C}$ or the open unit complex disc.
Let $\Pi: \Omega \rightarrow \mathcal{M}$ denotes the universal covering map, and define  $X: \mathcal{M} \rightarrow \mathbb{R}^3$ the lift of
$\Tilde{X}$ , i.e., $X = \Tilde{X} \circ \Pi$. Since $X$ is also a harmonic immersion, it follows that
$(X_1)_z,(X_2)_z$, and $(X_3)_z$ are holomorphic. Then, we have 
\begin{equation}\label{equation oh H}
    (X_1)_z^2+(X_2)_z^2+(X_3)_z^2= \mathfrak{H},
\end{equation}
$$
\mathfrak{H}=((X_1)_z+i (X_2)_z)((X_1)_z-i (X_2)_z)+(X_3)_z^2.
$$
Fix a point $z_0 \in \Omega$. Then, we can define
\begin{equation}
    L(z):=\int_{z_0}^z ((X_1)_z+i (X_2)_z)dz,\: P(z):=\int_{z_0}^z ((X_1)_z-i (X_2)_z)dz.
\end{equation}

Since $\Omega$ is a simply connected domain and the integrand functions are holomorphic, the above integrals do not depend on the path from $z_0$ to $z$, so $L$ and $P$ are well-defined holomorphic functions. We prove that $X(z)=(L(z)+\overline{P(z)}, h(z))$, where $h(z)=X_3(z)$ is a harmonic function (because $\left(X_3\right)_{z \bar{z}}=0$ ). For this, we note that

$$
\begin{aligned}
L(z)+\overline{P(z)} & =\int_{z_0}^z\left[\left(X_1\right)_z+i\left(X_2\right)_z\right] \mathrm{d} z+\int_{z_0}^z\left[\left(X_1\right)_{\bar{z}}+i\left(X_2\right)_{\bar{z}}\right] \mathrm{d} \bar{z} \\
& =\int_{z_0}^z \mathrm{~d} X_1+i \int_{z_0}^z \mathrm{~d} X_2=X_1(z)+i X_2(z).
\end{aligned}
$$
Where in the last equality, we have assumed (without loss of generality) that $X\left(z_0\right)=$ $(0,0,0)$. In addition, we observe that equation \eqref{equation oh H} can be written as
\begin{equation}\label{L_z P_z}
L_z P_z+\left(h_z\right)^2=\mathfrak{H}, 
\end{equation}
that is the condition \eqref{condition 1 in theorem 1} in Theorem \ref{theorem 1}. Finally, to prove that $X$ is an Enneper immersion associated to the harmonic function $h$, it remains to verify equation \eqref{condition 2 in theorem 1}. Further, we observe that
$$
\left(X_1\right)_z=\frac{L_z+P_z}{2}, \quad\left(X_2\right)_z=\frac{i\left(P_z-L_z\right)}{2},
$$
and taking into account \eqref{L_z P_z}, we have 
$$
0<2||X_x \times X_y||^2=\left|L_z\right|^2+\left|P_z\right|^2+2\left|h_z\right|^2-2|\mathfrak{H}|\leq\left(\left|L_z\right|-\left|P_z\right|\right)^2 ,
$$
since $X$ is an immersion. This completes the proof.
\end{proof}

Let \( f: \Omega \to \mathbb{C} \) be a complex harmonic function, where \( \Omega \) is a simply connected domain. Then \( f \) can be written as $f = L + \overline{P},$ where \( L \) and \( P \) are holomorphic functions in \( \Omega \). The quantity $v_f:= \frac{\bar{f}_{\bar{z}}}{ f_z}$, known as the
\textit{second complex dilatation}, turns out to be more relevant than the first complex
dilatation $\mu_f$. Since $|v_f |=|\mu_f |$, it follows that $f$ is quasiconformal if
and only if $|v_{f}(z)|<1$ on $\Omega$ (For details see \cite{Duren2004}).

Moreover, if \( f \) is a quasiconformal harmonic function, then the condition $|P_z| < |L_z|$ holds throughout \( \Omega \). Therefore, we can state the following:

\begin{prop}
    Let $f= L + \bar{P}: \Omega \rightarrow \mathbb{C}$ be a quasiconformal harmonic function on the simply connected domain $\Omega$. Suppose that
 $h : \Omega \rightarrow \mathbb{R}$ is a harmonic function.
Then the map $X:\Omega \rightarrow \mathbb{C}\times \mathbb{R}$ given by $X(z)= (L(z)+\overline{P(z)},h(z))$ defines a harmonic Enneper graph of $h$.
\end{prop}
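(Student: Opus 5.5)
The plan is to reduce the statement directly to Theorem~\ref{theorem 1}. Everything needed to apply that theorem is either given or can be manufactured, so no new analysis is required.

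First, the data. Since $f=L+\bar P$ with $L,P$ holomorphic on $\Omega$, both $L_z$ and $P_z$ are holomorphic. Since $h$ is real harmonic, $h_z$ is holomorphic. I would simply \emph{define} the Hopf differential by
\[
\mathfrak{H}:=L_zP_z+(h_z)^2 .
\]
This is a holomorphic function on $\Omega$, hence a holomorphic $2$-form on the simply connected domain $\Omega$. Condition \eqref{condition 1 in theorem 1} then holds by construction.

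Next, condition \eqref{condition 2 in theorem 1}. By the discussion preceding the proposition, quasiconformality of $f$ means $|v_f|<1$ on $\Omega$. Here $\bar f_{\bar z}=\overline{P_z}$ and $f_z=L_z$, so $v_f=\overline{P_z}/L_z$. The condition $|v_f|<1$ therefore gives $|P_z|<|L_z|$ everywhere. In particular $|L_z|\neq|P_z|$ on $\Omega$.

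The one point to handle with care is that $v_f$ must be well defined, which requires $L_z\neq 0$. This is implicit in the definition: an orientation-preserving quasiconformal map has nonvanishing Jacobian $|L_z|^2-|P_z|^2>0$. If one worries about isolated zeros of $L_z$, this Jacobian positivity is what rules them out, and it directly yields $|L_z|>|P_z|\ge 0$.

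Finally, Theorem~\ref{theorem 1} applies. It shows that $X=(L+\bar P,h)$ is a harmonic immersion with Hopf differential $\mathfrak{H}$. By the terminology introduced after that theorem, its image is a harmonic Enneper graph of $h$ with Enneper data $(L_z,P_z,h_z)$. I do not expect a genuine obstacle; the only subtle step is justifying the strict inequality $|P_z|<|L_z|$ from quasiconformality, including the nonvanishing of $L_z$.
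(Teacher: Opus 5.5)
Your proposal is correct and follows the paper's implicit argument: quasiconformality of $f$ gives $|P_z|<|L_z|$ on $\Omega$, and with $\mathfrak{H}:=L_zP_z+(h_z)^2$, Theorem~\ref{theorem 1} applies at once; your remark that this presupposes $L_z\neq 0$ is a point the paper leaves tacit. One cosmetic slip: since $f_{\bar z}=\overline{P_z}$, the quotient $\overline{P_z}/L_z$ you wrote is actually $\mu_f$, whereas $v_f=\overline{f_{\bar z}}/f_z=P_z/L_z$, but the moduli coincide, so the conclusion is unaffected.
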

We state the following remark, which follows from Proposition 2.12 in \cite{AlarconLopez2013}.
\begin{remark}
   A harmonic immersion $X$ is quasiconformal if and only if $\operatorname{sup}_{\Omega} \frac{|\mathfrak{H}|}{\|\Phi\|^2}<1$.
\end{remark}
As a consequence, we have the following:
\begin{prop}
    A harmonic immersion $X(z)=(L(z)+\overline{P(z)}, h(z))$ is quasiconformal if and only if
    $\operatorname{sup}_{\Omega} \frac{|L_z P_z+\left(h_z\right)^2|}{\left|L_z\right|^2+\left|P_z\right|^2+2\left|h_z\right|^2} <1$.
\end{prop}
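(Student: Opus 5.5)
The plan is to reduce the statement to the preceding Remark by writing $|\mathfrak{H}|$ and $\|\Phi\|^2$ in terms of the Enneper data $(L_z,P_z,h_z)$. By the Remark, $X$ is quasiconformal if and only if $\sup_{\Omega} |\mathfrak{H}|/\|\Phi\|^2<1$, where $\Phi=(\phi_j)_{j=1,2,3}=(\partial_z X_j)_{j=1,2,3}$ and $\mathfrak{H}=\sum_j\phi_j^2$. So it suffices to show that, pointwise on $\Omega$, this quotient coincides with the quotient in the statement up to a fixed positive constant factor. We then have to check that the constant is harmless, or is absorbed by the normalization of the Remark.

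First we compute $\Phi$ for $X=(L+\bar P,h)$. Write $X_1+iX_2=L+\bar P$, so $X_1=\operatorname{Re}(L+P)$ and $X_2=\operatorname{Im}(L-P)$. As in the proof of Theorem \ref{theorem 1},
\[
\phi_1=\tfrac{L_z+P_z}{2},\qquad \phi_2=\tfrac{i(P_z-L_z)}{2},\qquad \phi_3=h_z .
\]
This uses the paper's convention for $\partial_z$; in that convention $\partial_z$ of a holomorphic function is what the paper writes as $L_z$, so all formulas stay in the same normalization. Then $\mathfrak{H}=\phi_1^2+\phi_2^2+\phi_3^2=L_zP_z+(h_z)^2$, exactly as in \eqref{hopf diff equation}. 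For the norm, the parallelogram identity gives
\[
|\phi_1|^2+|\phi_2|^2=\tfrac14\bigl(|L_z+P_z|^2+|L_z-P_z|^2\bigr)=\tfrac12\bigl(|L_z|^2+|P_z|^2\bigr),
\]
and hence $2\|\Phi\|^2=|L_z|^2+|P_z|^2+2|h_z|^2$. This matches the identity already used in the proof of Theorem \ref{theorem 1}.

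It follows that
\[
\frac{|\mathfrak{H}|}{\|\Phi\|^2}=\frac{2\,|L_zP_z+(h_z)^2|}{|L_z|^2+|P_z|^2+2|h_z|^2}.
\]
The main obstacle is this factor of $2$. Taken literally, the Remark's criterion would be $\sup_\Omega$ of the displayed quotient $<\tfrac12$, not $<1$. Note also that $|\mathfrak{H}|<\|\Phi\|^2$ holds automatically by condition (1) of Theorem \ref{harmonic immersion}, so the Remark cannot be read with $\|\Phi\|^2$ under the normalization $\|\Phi\|^2=\sum|\phi_j|^2$ and still be a nontrivial criterion unless the sup is meant as a uniform bound.

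I would handle this in two steps. First, check Proposition 2.12 of \cite{AlarconLopez2013} to see whether its denominator is normalized as $\tfrac12\sum|\phi_j|^2$ or $\sum|\phi_j|^2$ relative to the paper's $\partial_z$. Second, adopt the normalization under which the Remark is stated, so that $\|\Phi\|^2$ corresponds to $|L_z|^2+|P_z|^2+2|h_z|^2$. With that identification the claimed equivalence follows immediately by substitution, since the quotients then agree pointwise and therefore their suprema agree. If the normalizations genuinely differ, the honest conclusion is that the criterion becomes $\sup<\tfrac12$. I would flag this and state the proposition with the constant adjusted accordingly.
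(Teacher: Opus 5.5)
Your route is the paper's route: the paper gives no argument beyond ``as a consequence'' of the preceding Remark. Your computation is correct. With $\phi_1=\tfrac{L_z+P_z}{2}$, $\phi_2=\tfrac{i(P_z-L_z)}{2}$, $\phi_3=h_z$ one gets $\mathfrak{H}=L_zP_z+(h_z)^2$ and $2\|\Phi\|^2=|L_z|^2+|P_z|^2+2|h_z|^2$, which is the identity the paper itself uses in the proof of Theorem~\ref{theorem 1}. Writing $Q$ for the printed quotient, this gives $|\mathfrak{H}|/\|\Phi\|^2=2Q$. The printed constant comes from substituting $|L_z|^2+|P_z|^2+2|h_z|^2$ for $\|\Phi\|^2$, that is, from dropping this factor $2$.

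The gap is your fallback of adopting a normalization under which the printed equivalence ``follows immediately by substitution''. No such normalization exists.

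\begin{itemize}
\item Rescaling $\partial_z$ by a constant multiplies every $\phi_j$ and each of $L_z,P_z,h_z$ by that constant. Both quotients are unchanged by this scaling, so the factor $2$ survives any convention.
\item Any reinterpretation that turns the Remark's quotient into $Q$ makes that quotient equal to $\tfrac12|\mathfrak{H}|/\sum_j|\phi_j|^2$. An example is reading $\|\Phi\|^2$ as $2\sum_j|\phi_j|^2$; note that $\tfrac12\sum_j|\phi_j|^2$ goes the wrong way and gives $\sup_\Omega Q<\tfrac14$. Under such a reading the Remark asserts $\sup_\Omega|\mathfrak{H}|/\sum_j|\phi_j|^2<2$, which every harmonic immersion satisfies by condition (1) of Theorem~\ref{harmonic immersion}. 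So there is nothing to check in the cited reference.
\end{itemize}

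Your triviality observation should be applied to $Q$, not to the Remark. The Remark, with $\|\Phi\|^2=\sum_j|\phi_j|^2$, is a genuine uniform bound, since a pointwise strict inequality on the open set $\Omega$ does not keep the supremum below $1$. By contrast, $|\mathfrak{H}|<\|\Phi\|^2$ gives $Q<\tfrac12$ pointwise, hence $\sup_\Omega Q\le\tfrac12<1$ for every harmonic immersion.

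So the printed ``if'' direction is false whenever the Remark's supremum equals $1$. For example, on the unit disc take $L=z$, $P=z^2/2$, $h=\operatorname{Re} z$. In the paper's convention:
\begin{itemize}
\item $(L_z,P_z,h_z)=(2,2z,1)$, so $\mathfrak{H}=4z+1$ and $\|\Phi\|^2=3+2|z|^2\ge|\mathfrak{H}|+2(1-|z|)^2$. Hence this is a harmonic immersion.
\item $|\mathfrak{H}|/\|\Phi\|^2\to1$ as $z\to1$ along the real axis, so by the Remark it is not quasiconformal.
\item Yet $Q=|4z+1|/(6+4|z|^2)<\tfrac12$ throughout, so the printed condition holds.
\end{itemize}

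Drop the hedge and state the corrected proposition: $X$ is quasiconformal if and only if
\[
\sup_{\Omega}\frac{2\,|L_zP_z+(h_z)^2|}{|L_z|^2+|P_z|^2+2|h_z|^2}<1,
\]
equivalently $\sup_\Omega Q<\tfrac12$. For this statement your substitution argument is a complete proof.
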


We begin with the observation that if $\mathcal{D}_X=\left(L_z, P_z, h_z\right)$ is the Enneper data of a given harmonic Enneper immersion $X$ in $\mathbb{C} \times\mathbb{R}$, defined in the simply connected domain $\Omega$, and if $f: \Omega \rightarrow \mathbb{C}$ is a holomorphic function such that $f\neq 0$ everywhere in $\Omega$, then we define
$$
f \mathcal{D}_X:=\left(f L_z, f P_z, f h_z\right),
$$
to be scaled Enneper data of a new harmonic Enneper immersion in $\mathbb{C} \times \mathbb{R}$. We note that this surface is the harmonic Enneper graph of the harmonic function $h_1: \Omega \rightarrow \mathbb{R}$ defined by
$$
h_1(z):=h_1\left(z_0\right)+2 \operatorname{Re} \int_{z_0}^z f(z) h_z(z) d z.
$$
Also, the harmonic Enneper graph associated with $h_1$ is given by:
$$
X_1= \left(L_1+\overline{P_1}, h_1\right),
$$
where we define
$$
L_1(z):=\int_{z_0}^z f(z) L_z(z) d z, \quad P_1(z):=\int_{z_0}^z f(z) P_z(z) d z,
$$
to be well-defined holomorphic functions in $\Omega$. Moreover, we can state the following:
\begin{prop}
Let \( \mathcal{D}_X = (L_z, P_z, h_z) \) be the Enneper data of a quasiconformal harmonic Enneper immersion. Let \( f \) be a holomorphic function on a simply connected domain  \( \Omega \) such that $f\neq 0$ everywhere in $\Omega$. Then the scaled Enneper data \( f \mathcal{D}_X = (f L_z, f P_z, f h_z) \) also defines a quasiconformal harmonic Enneper immersion.
\end{prop}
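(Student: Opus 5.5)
Since $\Omega$ is simply connected and $f$, $L_z$, $P_z$, $h_z$ are holomorphic, the functions $L_1$, $P_1$ and $h_1$ defined just before the statement are well defined. Here $L_1,P_1$ are holomorphic, and $h_1$ is real harmonic because it is twice the real part of a holomorphic primitive. Their derivatives are $(L_1)_z=fL_z$, $(P_1)_z=fP_z$ and $(h_1)_z=fh_z$. For the last one I use that $\partial_z$ of $\operatorname{Re} F$ equals $F'$ up to the paper's normalisation of the Wirtinger operator; with the factor $2$ in the definition of $h_1$ this is exactly how the paper's convention is set up. So the new data are literally $f\mathcal{D}_X$. The plan is to verify the two hypotheses of Theorem \ref{theorem 1} for this data, and then the quasiconformality criterion of the last Proposition.

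\emph{Step 1 (Enneper conditions).} Put $\mathfrak{H}_1:=(L_1)_z(P_1)_z+((h_1)_z)^2=f^2\big(L_zP_z+h_z^2\big)=f^2\mathfrak{H}$. This is a holomorphic function on $\Omega$, so condition \eqref{condition 1 in theorem 1} holds with Hopf differential $f^2\mathfrak{H}$. For condition \eqref{condition 2 in theorem 1} we have $|(L_1)_z|-|(P_1)_z|=|f|\,(|L_z|-|P_z|)$. This is nonzero because $f\neq0$ everywhere and $|L_z|\neq|P_z|$ for the original Enneper data. Theorem \ref{theorem 1} then shows that $X_1=(L_1+\overline{P_1},h_1)$ is a harmonic immersion. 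Equivalently, one can check condition (1) of Theorem \ref{harmonic immersion} directly: $\|\Phi_1\|^2-|\mathfrak{H}_1|=|f|^2(\|\Phi\|^2-|\mathfrak{H}|)>0$.

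\emph{Step 2 (quasiconformality).} By the last Proposition, $X$ is quasiconformal if and only if
\[
\sup_\Omega \frac{|L_zP_z+h_z^2|}{|L_z|^2+|P_z|^2+2|h_z|^2}<1 .
\]
For the scaled data, both the numerator and the denominator of this ratio get multiplied by $|f|^2$. Since $f$ never vanishes, this factor cancels pointwise, so the ratio for $X_1$ equals the ratio for $X$ at every point of $\Omega$. Hence the supremum is unchanged and is $<1$, and $X_1$ is quasiconformal.

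The only delicate point I expect is bookkeeping. First, the constant in $h_1$ must be consistent with the paper's convention $\partial_z=\partial_x-i\partial_y$, so that $(h_1)_z=fh_z$ exactly. If this convention produces a factor, I would absorb it into $f$, which does not affect the argument since only $|f|^2$ cancels. Second, the denominators above must be nonvanishing; this follows from the immersion property established in Step 1. The essential observation is the homogeneity: the criterion is of degree $0$ under scaling the data by a nowhere-zero holomorphic factor.
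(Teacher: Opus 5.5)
Your proof is correct. It is the homogeneity argument the paper leaves implicit: the paper states this proposition without proof, immediately after the quasiconformality criterion.

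The one bookkeeping point you flagged checks out. Under the paper's literal $\partial_z=\partial_x-i\partial_y$ one gets $((L_1)_z,(P_1)_z,(h_1)_z)=2f\,(L_z,P_z,h_z)$, and this factor is the same nowhere-zero multiple in all three components. So absorbing the $2$ into $f$ is legitimate, and condition \eqref{condition 2 in theorem 1}, the immersion inequality and the ratio $|\mathfrak{H}|/\|\Phi\|^2$ are all preserved pointwise.
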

\begin{remark}
In the special case where \( f \equiv i \) (the imaginary unit in \(\mathbb{C}\)) on \( \Omega \), the harmonic Enneper immersion, obtained from the Enneper data \( i \mathcal{D}_X \), is called the \emph{conjugate harmonic Enneper immersion} of \( X \).
\end{remark}

   \section{Application}\label{application}
In this section, we introduce rotational harmonic immersions and explain why the Enneper representation simplifies certain problems. As an application, we address the question of determining the number of rotational harmonic immersions passing through two coaxial circles of the same radius.
Let the annulus be defined as 
$ A=\{r_1<|z| <r_2\} \subset \mathbb{C}$, where \( r_1<r_2 \), and consider the harmonic Enneper immersion \( X: A \to \mathbb{C} \times \mathbb{R} \), given by $ X = (L + \overline{P}, h),$ 
which is assumed to be non-planar. Without loss of generality, we assume that \( X(A) \) is invariant under the family of vertical rotations
\[ B_{\theta}=\begin{bmatrix}
\cos\theta & \sin\theta & 0 \\
-\sin\theta & \cos\theta & 0 \\
0 & 0 & 1
\end{bmatrix}. \]
Let \( m \neq 0 \) denote the additive period \( i \operatorname{Im} \int_{\gamma}h_z \) of the conjugate harmonic function \( h^*: A \to \mathbb{R} \) of \( h \), where \( \gamma \) is a loop in \( A \). Define \( z = e^{(h+ih^*)\frac{2\pi}{m}} \) and observe that \( h_z(z) = \frac{dz}{z} \) and \( h(z) = \ln|z| \). Up to a symmetry with respect to a vertical plane, we can assume that \( B_\theta \) induces the biholomorphism \( z \to e^{i\theta}z \) on \( A \). Any harmonic function $k$, defined on an annulus \( A \) can be represented as
\begin{equation}
    k(z)=\sum_{-\infty}^{\infty} a_n z^n+ \frac{b_n}{\bar{z}^n}+c\ln{|z|^2}.
\end{equation}
Observe that, to satisfy \( k(e^{i\theta}z)=e^{i\theta}k(z) \), we must have
\begin{equation}
    k(z)= {a_{1}}{z}+ \frac{b_{1}}{\bar{z}}.
\end{equation}
where \( a_1, b_1 \in \mathbb{C} \).

\begin{definition}
Let \( \gamma \in \mathcal{C}^{\infty}(I) \) be a smooth function defined on an open interval \( I \subset \mathbb{R} \). A rotational surface in \( \mathbb{C} \times \mathbb{R} \cong \mathbb{R}^3(x,y,t) \) is locally parametrized by
\[
X(r, \theta) = \left(e^{i\theta} \gamma(r), r\right),
\]
where \( (r, \theta) \) are polar coordinates on $\mathbb{C}$ and
 \( \gamma(r) \) is called the profile curve (or radial function) of the surface. The radial function \( \gamma(r) \) describes how the distance from the rotational axis (i.e., the t-axis) changes as a function of height.
\end{definition}
Now, we can conclude the following:
\begin{prop}
    Every rotational harmonic Enneper immersion in \( \mathbb{C} \times \mathbb{R} \) is given by 
    \[ X(r,\theta)=\frac{1}{c}\left(e^{i\theta}\Big(ar+\frac{b}{r}\Big),\ln{r}\right), \]
    where \( a, b \in \mathbb{C} \), \( c \in \mathbb{R}^{+}\backslash\{0\} \), and \( (r,\theta) \) are the polar coordinates on \( A \).
\end{prop}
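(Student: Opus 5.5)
The plan is to use the normalization set up just before the statement and then pin down the horizontal component by its symmetry. Let $X=(L+\overline{P},h):A\to\mathbb{C}\times\mathbb{R}$ be a non-planar harmonic Enneper immersion whose image is invariant under the rotations $B_\theta$.

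\emph{Step 1: normalize the height function.} Invariance under $B_\theta$ means the height is unchanged by rotation, so $h$ is constant on the orbits of the induced automorphism $z\mapsto e^{i\theta}z$ of $A$. A harmonic function on an annulus that is rotation invariant depends only on $|z|$. Its $\theta$-independent part in the Laurent-type expansion is therefore $\alpha+\beta\ln|z|^2$. Since $X$ is non-planar, $\beta\neq 0$; this is the same as saying the period $m$ of $h^*$ is nonzero. After the change of coordinate $z=e^{(h+ih^*)2\pi/m}$ described above and a vertical translation, we get $h=\frac{1}{c}\ln|z|$ for a real constant $c$. Composing with a reflection in a horizontal plane if necessary, we may take $c>0$.

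\emph{Step 2: use equivariance for the horizontal part.} Up to a symmetry in a vertical plane, $B_\theta$ acts on $A$ as $z\mapsto e^{i\theta}z$. So the complex harmonic function $k:=L+\overline{P}$ satisfies $k(e^{i\theta}z)=e^{i\theta}k(z)$ for all $\theta$. Expand $k$ as above:
\[
k(z)=\sum_n a_nz^n+\frac{b_n}{\bar z^n}+c'\ln|z|^2 .
\]
Comparing coefficients of $e^{in\theta}$ on both sides of the equivariance relation kills every term except $a_1z$ and $b_1/\bar z$. Hence $k(z)=a_1z+b_1/\bar z$, that is, $L(z)=a_1z$ and $P(z)=\bar b_1/z$ up to constants absorbed by translation.

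\emph{Step 3: write it in polar coordinates.} Put $z=re^{i\theta}$. Then
\[
a_1z+\frac{b_1}{\bar z}=e^{i\theta}\Bigl(a_1r+\frac{b_1}{r}\Bigr),
\]
and together with $h=\frac1c\ln r$ this gives
\[
X=\Bigl(e^{i\theta}\bigl(a_1r+\tfrac{b_1}{r}\bigr),\,\tfrac1c\ln r\Bigr).
\]
Setting $a=ca_1$ and $b=cb_1$ yields the stated form. For the converse, any such $X$ is harmonic and rotational by inspection. Condition \eqref{condition 2 in theorem 1} reduces to $|a|\neq |b|/r^2$ on $A$, which is the immersion condition. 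By Theorem~\ref{theorem 1} such an $X$ is a harmonic Enneper immersion whenever that condition holds.

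\emph{Main obstacle.} The delicate step is Step 1. The same coordinate $z$ has to linearize the rotation as $z\mapsto e^{i\theta}z$ and also make $h$ a multiple of $\ln|z|$. This needs the induced automorphism group of $A$ to commute with the map $h+ih^*$. The argument I would give: $B_\theta$ preserves $h$, hence acts on $h+ih^*$ by a translation in the imaginary direction. It therefore acts on $e^{(h+ih^*)2\pi/m}$ by multiplication by a unimodular constant, and one then identifies that constant with $e^{\pm i\theta}$. The sign ambiguity is exactly the vertical-plane symmetry allowed in the statement.
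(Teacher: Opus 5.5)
Your proposal is correct and follows essentially the paper's own route: normalize the height via $z=e^{(h+ih^*)2\pi/m}$, impose $k(e^{i\theta}z)=e^{i\theta}k(z)$ on the Laurent-type expansion to isolate $a_1z+b_1/\bar z$, and rewrite in polar coordinates. Like the paper, it rests on the assumption that $B_\theta$ induces the biholomorphism $z\mapsto e^{i\theta}z$ of $A$, which you at least flag explicitly. One correction to your converse remark, which the statement does not need: $|a|\neq |b|/r^2$ is sufficient but not necessary for an immersion here. Since $X_r$ has nonzero vertical part and $X_\theta$ is horizontal, the true condition is $ar+b/r\neq 0$; for example, the catenoid $a=b=\tfrac12$, $c=1$ is immersed on $|z|=1$ even though $|L_z|=|P_z|$ there.
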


\begin{Note}
\begin{enumerate}
   \item For the fixed value \( a = b = \frac{1}{2} \) and $c=1$, we obtain the well-known minimal surface of revolution, the catenoid (see in \cite{Iwaniec2012}).
   \item  The profile curve of a rotational harmonic Enneper immersion can be expressed as \(\left( ae^cR+\frac{b}{e^cR}\right) \), where \( R \) is a parameter along the axis of rotation. Using the coordinate transformation in \( \mathbb{R}^3 \) given by \( (x,y,t)\mapsto (x,y,\ln{r}=cR) \),  we align the axis of rotation with the \( t \)-axis. This transformation simplifies the framing of the subsequent theorem.
    \end{enumerate}
\end{Note}

We now investigate the number of rotational harmonic Enneper immersions connecting two coaxial circles. The radial function for the rotational harmonic Enneper immersion is given by
\begin{equation} \label{profile curve}
    f(z) = \frac{1}{c}\left(ae^{cR}+\frac{b}{e^{cR}}\right),
\end{equation}
where \( a,b \in \mathbb{C} \) and \(c \in \mathbb{R}^{+}\backslash \{0\} \). 
\begin{theorem}
Let \( C_{\pm l} = \{ (x, y, t) \in \mathbb{R}^3 : x^2 + y^2 = r^2, \, t = \pm l \} \) be two circles of radius \( r > 0 \), centered along the \( z \)-axis and separated by a vertical distance \( 2l > 0 \). Then there exists a constant \( c_1 \in \mathbb{R}\) such that the number of rotational harmonic Enneper immersions connecting the circles \( C_{-l} \cup C_l \) are exactly two, if \( \frac{l}{r} < c_1 \); exactly one, if \( \frac{l}{r} = c_1 \) and none, if \( \frac{l}{r} > c_1 \).

\end{theorem}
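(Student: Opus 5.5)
We reduce the statement to counting solutions of a scalar equation built from the profile curve \eqref{profile curve}. By the preceding proposition and Note, every rotational harmonic Enneper immersion has radial function $f(R)=\frac{1}{c}\left(ae^{cR}+be^{-cR}\right)$, with the axis of rotation aligned with the $t$-axis, so that $t=R$. The surface must pass through $C_{\pm l}$. Because the configuration is symmetric under $t\mapsto -t$, we look for profiles satisfying $|f(l)|=|f(-l)|=r$. First we normalize: we rotate about the axis (multiplying $a,b$ by a common unimodular constant, which does not change the surface), and we use the reflection in the plane $t=0$, which exchanges $a$ and $b$. We then argue that the boundary conditions force $|a|=|b|$. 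Indeed, $|ae^{cl}+be^{-cl}|=|ae^{-cl}+be^{cl}|$ expands to $(|a|^2-|b|^2)(e^{2cl}-e^{-2cl})=0$, so $|a|=|b|$ since $cl\neq 0$.

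Next we reduce the profile to a catenary-type curve. Write $a=\rho e^{i\alpha}$ and $b=\rho e^{i\beta}$. After the rotation we may take $b=\bar a$, which makes $ae^{cR}+be^{-cR}$ real up to the phase parametrized by $\alpha-\beta$. The real part then becomes $2\rho\cos\delta\,\cosh(cR)$ and the imaginary part $2\rho\sin\delta\,\sinh(cR)$. We intend to argue that the geometrically distinct surfaces (non-planar, i.e.\ distinct up to the rotation symmetry $B_\theta$) are governed by the modulus $|f(R)|$. The natural choice, matching the catenoid case $a=b=\tfrac12$, is to fix the phase so that $f(R)=\frac{2\rho}{c}\cosh(cR)$. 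Normalizing the catenoid family by a single parameter $\lambda=c/(2\rho)$ (or by absorbing $\rho$), the boundary condition becomes
\[
r=\frac{1}{\lambda}\cosh(\lambda l)\quad\Longleftrightarrow\quad \frac{l}{r}=\frac{\lambda l}{\cosh(\lambda l)}.
\]

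The counting is then the classical catenoid argument. Set $u=\lambda l>0$ and $g(u)=u/\cosh u$. We have $g(0)=0$, $g(u)\to 0$ as $u\to\infty$, and $g'(u)=(\cosh u-u\sinh u)/\cosh^2u$. This vanishes exactly at the unique root $u_0$ of $u\tanh u=1$, which is unique because $u\tanh u$ is strictly increasing. Hence $g$ increases on $(0,u_0)$, decreases afterwards, and has maximum $c_1:=g(u_0)=1/\sinh u_0$. It follows that the equation $g(u)=l/r$ has two solutions, one solution, or none according as $l/r<c_1$, $l/r=c_1$, or $l/r>c_1$. Each solution $u$ determines $\lambda$, and hence the immersion, uniquely.

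The main obstacle is the middle step: justifying that the remaining free parameters (the phase difference $\delta$ and the scaling between $\rho$ and $c$) do not produce further inequivalent solutions. Without this, the count would be infinite. I would try first to show that $\delta$ is removable. One route is to show that a nonzero $\sin\delta$ only twists the parametrization while leaving the image surface the same up to the identification made in the Note. The other route is to impose the same-radius condition at every height consistent with a rotational surface, which forces $\sin\delta=0$. Once $\delta=0$ is secured, the $(\rho,c)$ redundancy collapses to the single parameter $\lambda$, and the count above is exact.
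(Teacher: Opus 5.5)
The gap is the reduction to a one-parameter family. Neither half of it can be closed the way you propose. Your counting of roots of $u/\cosh u$ is fine, and so is your derivation of $|a|=|b|$.

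There is no $(\rho,c)$ redundancy. With $\lambda=c/(2\rho)$ the boundary condition reads $r=\frac{1}{\lambda}\cosh(cl)$, not $\frac{1}{\lambda}\cosh(\lambda l)$, and these agree only when $\rho=\tfrac12$. For example, $(\rho,c)=(\tfrac12,1)$ and $(1,2)$ share $\lambda=1$ but have profiles $\cosh R$ and $\cosh 2R$. Indeed, for every $c>0$ the choice $\rho=rc/(2\cosh(cl))$ gives $\frac{2\rho}{c}\cosh(\pm cl)=r$, so with $\rho$ free there is a continuum of solutions.

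Nor is $\delta$ removable. Since $|f(R)|^2=\frac{4\rho^2}{c^2}\bigl(\cosh^2(cR)-\sin^2\delta\bigr)$, the neck radius $\frac{2\rho}{c}|\cos\delta|$ depends on $\delta$. Different values of $\sin^2\delta$ therefore give different surfaces, all immersed when $\cos\delta\neq 0$. Your first route fails because the image depends on $\delta$. Your second fails because every surface $(e^{i\theta}f(R),R)$ is already rotationally invariant, so no further condition forces $\sin\delta=0$.

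The finite count holds only once these parameters are fixed as data, and that is what the paper's proof does. It takes $a=b\in\mathbb{R}$ and declares that ``$r$ and $a$ are given, and $c$ is the unknown.'' It then minimizes $c\mapsto\frac{2a}{c}\cosh(cl)$, whose unique critical point, given by $cl\tanh(cl)=1$, is your $u_0$. This is why its constant $c_1\approx 0.3314/a$ depends on $a$.

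To repair your argument, make that normalization explicit. With $a=b=\rho>0$ fixed, the condition becomes $l/r=u/(2\rho\cosh u)$ with $u=cl$. Your monotonicity analysis then yields two, one, or no solutions according as $l/r$ is less than, equal to, or greater than $c_1=1/(2\rho\sinh u_0)$.

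As written, you have silently set $\rho=\tfrac12$, which is the catenoid case; there your $c_1=1/\sinh u_0\approx 0.6627$ matches the paper's $0.3314/a$. You then presented that choice as a consequence of a symmetry that does not exist.
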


\begin{proof}
   To prove this, we restate the problem in terms of planar curves in the \( xy \)-plane and observe that the only possible case arises when \( a = b \in \mathbb{R}\) in \eqref{profile curve}. The problem reduces to determining how many curves of the form
    \[ f(l) = \frac{1}{c}(ae^{cl}+\frac{a}{e^{cl}}), \]
    pass through the point \( P=(l, r) \). This is equivalent to solving the equation
    \[ \frac{a}{c}(e^{cl}+\frac{1}{e^{cl}}) = r, \]
    where \( r \) and \( a \) are given, and \( c \neq 0 \) is the unknown.
    Define the function
    \[ g(c) = \frac{a}{c}(e^{cl}+\frac{1}{e^{cl}}).\]
    We observe that
    \[ \lim _{c \to \infty} g(c) = \infty, \quad \text{or} \quad \lim _{c \to \infty} g(c) = -\infty,\]
    depending on the $a$. Without loss of generality, we assume $ \lim _{c \to \infty} g(c) = \infty$.
    Now, the derivative
    $ g^{\prime}(c)={2a (cl \sinh (c l)-\cosh (c l))}/{c^2} $
    has an unique minimum at \( \coth(cl) = cl \), yielding \( c=\frac{1.1997}{l} \).
Thus, the existence condition is given by
    \[ \frac{2al}{1.1997}\cosh(1.1997) \leq r, \]
    which simplifies to
    $ {2l}/{r} \leq {0.6627}/{a}.$
    Hence, there exists a critical value \( c_1 \approx \frac{0.3314}{a} \) such that, we have the following cases:
    \begin{enumerate}
        \item If \( l/r < c_1 \), there are exactly two solutions.
    \item  If \( l/r = c_1 \), there is exactly one solution.
   \item  If \( l/r > c_1 \), no solution exists. 
   \end{enumerate}
    Hence the claim.
\end{proof}
\section{Acknowledgement}
The author gratefully acknowledges Dr. Rahul Kumar Singh and Mr. Subham Paul for their valuable comments. The author also acknowledges the financial support from the University Grants Commission of India (UGC) under the UGC-JRF scheme (Beneficiary Code: BININ04008604).

%\section{Future work}

%The author is exploring similar results for Enneper-type representations of spacelike and timelike harmonic Enneper immersions in the Lorentz–Minkowski space $\mathbb{L}^3:= (\mathbb{R}^3,dx^2+dy^2-dz^2)$, investigating their applications based on their causal characters.

%\section{Declarations}
%\begin{itemize}
%\item  \textbf{Funding}: Not applicable.
%\item  \textbf{Data Availability}: Not applicable
%\item  \textbf{Conflict of interest}: There are no conflicts of interest to declare.
%\end{itemize}

\bibliography{Priyank}
\bibliographystyle{ieeetr}
\end{document}